\title{Coherent ultrafilters and nonhomogeneity}
\author{Jan Star\'y}
\affil{Czech Technical University, Prague}
\date{}
\theoremstyle{definition}
\newtheorem{definition}{Definition}[section]
\newtheorem{example}[definition]{Example}
\newtheorem{question}[definition]{Question}
\theoremstyle{plain}
\newtheorem{observation}[definition]{Observation}
\newtheorem{lemma}[definition]{Lemma}
\newtheorem{proposition}[definition]{Proposition}
\newtheorem{corollary}[definition]{Corollary}
\def\B{{\mathcal B}}
\def\C{{\mathcal C}}
\def\F{{\mathcal F}}
\def\J{{\mathcal J}}
\def\M{{\mathcal M}}
\def\U{{\mathcal U}}
\def\c{{\mathfrak c}}
\def\d{{\mathfrak d}}
\def\phi{\varphi}
\let\sub = \subseteq
\let\join = \vee
\let\Join = \bigvee
\let\meet = \wedge
\let\Meet = \bigwedge
\let\Union = \bigcup
\newcommand{\cov}{\operatorname{cov}}
\newcommand{\St}{\operatorname{St}}
\def\setof#1#2{\left\{#1; #2\right\}}
\def\seqof#1#2{\left(#1; #2\right)}
\begin{document}
\maketitle

\begin{abstract}
We introduce the notion of a {\em coherent $P$-ultrafilter\/}
on a complete ccc Boolean algebra, strenghtening the notion
of a $P$-point on $\omega$,
and show that these ultrafilters exist generically under $\c = \d$.
This improves the known existence result of Ketonen \cite{K}.
Similarly, the existence theorem of Canjar \cite{C} can be extended
to show that {\em coherently selective ultrafilters\/}
exist generically under $\c = \cov{\M}$.

We use these ultrafilters in a topological application:
a coherent $P$-ultrafilter on an algebra $\B$
is an {\em untouchable point\/} in the Stone space of $\B$,
witnessing its nonhomogeneity.
\end{abstract}

\medskip
{\bf Keywords}:
nonhomogeneity, ultrafilter, Boolean algebra, untouchable point

\medskip
{\bf AMS Subject classification}:
54G05, 06E10

\section{Introduction}

The article is organized as follows.

In section \ref{partitions}, we describe the lattice $Part(\B)$
of partitions of a complete ccc Boolean algebra $\B$
and see how a given ultrafilter $\U$ on $\B$
interplays with this lattice.

In section \ref{coherent}, we define {\em coherent $P$-ultrafilters\/}
and {\em coherently selective ultrafilters\/} on a complete ccc algebra
and show that they exist generically, i.e. every filter with a small base
can be extended into such ultrafilter, under conditions isolated
in \cite{K} and \cite{C}.

In section \ref{homogeneity}, we recall the homogeneity problem
for extremally disconnected compact Hausdorff spaces
--- the Stone spaces of complete Boolean algebras.
We show the relevance of coherent ultrafilters to this question:
a coherent $P$-ultrafilter on a complete ccc Boolean algebra
is an {\em untouchable point\/} in the corresponding Stone space.

\section{The lattice of partitions}\label{partitions}

Recall that a {\em partition\/} of a Boolean agebra
is a maximal antichain. We will denote the set of all
infinite partitions of an algebra $\B$ by $Part(\B)$.
We only consider infinite algebras.

\begin{definition}
Let $\B$ be a Boolean algebra. For two partitions $P,Q$ of $\B$,
say that $P$ {\it refines\/} $Q$ and write $P\preceq Q$
if for each $p\in P$ there is exactly one $q\in Q$ such that $p\leq q$.
Call $P\meet Q = \setof{p\meet q}{p\in P, q\in Q} \setminus \{0\}$
the {\it common refinement\/} of $P$ and $Q$.
\end{definition}

The relation $P\preceq Q$ is easily seen to be a partial order on $Part(\B)$.
Note that $P\meet Q$ is indeed a partition of $\B$
that refines both $P$ and $Q$. In fact, it is the
infimum of $\{P, Q\}$ in $(Part(\B), \preceq)$,
and makes $\left(Part(\B), \meet, \{1_\B\}, \preceq\right)$
a~semilattice with unit.

\begin{observation}
For a complete Boolean algebra $\B$,
the order $(Part(\B), \preceq)$ is a lattice.
This lattice is complete if and only if $\B$ is atomic.
\end{observation}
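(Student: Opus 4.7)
The plan is to prove the two halves of the observation in turn. For the first (that $(Part(\B),\preceq)$ is a lattice when $\B$ is complete), meets are already accounted for, so only suprema are needed. Given $P,Q\in Part(\B)$, I would consider the graph on $P\cup Q$ whose edges are the pairs $(p,q)$ with $p\in P$, $q\in Q$ and $p\meet q\neq 0$; take its connected components $C$; and, using completeness of $\B$, set $r_C=\bigvee C$. Three routine checks identify the resulting set as $P\vee Q$: the $r_C$ are pairwise disjoint (any nonzero $p\meet q$ would force $p,q$ into a common component) and their join is $1_\B$, so they form a partition; both $P$ and $Q$ refine this partition; and any common upper bound $R$ satisfies $\{r_C\}\preceq R$, because $p\meet q\neq 0$ forces the $R$-representatives of $p,q$ to coincide, whence each component lies below a single element of $R$.

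For the second half, the easy direction is atomic implies complete. When $\B$ is atomic, $\At(\B)$ refines every partition, so it is the minimum of $Part(\B)$; arbitrary infima are built by the same recipe applied to atoms, grouping $a\sim a'$ whenever $a$ and $a'$ lie below the same element of $P_\alpha$ for each $\alpha$ and joining each $\sim$-class. Arbitrary suprema are handled by the evident extension of the pairwise construction to the graph on $\bigcup_\alpha P_\alpha$.

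The harder direction --- not atomic implies not complete --- is where I expect the main effort. Pick $b\neq 0$ with no atoms below it, so $\B\rest b$ is complete atomless, fix a partition $\A'$ of $\neg b$, and let $\F$ be the family of all partitions of $\B$ whose restriction to $\neg b$ is $\A'$ (or just $\F=Part(\B)$ if $b=1_\B$). I would argue by contradiction: if $P^\star$ is an infimum of $\F$, then $P^\star$ itself lies in $\F$ --- otherwise, replacing its $\neg b$-part by $\A'$ yields a strictly $\preceq$-larger lower bound --- and $P^\star$ must contain a nonzero piece $p^\star\leq b$. Atomlessness of $\B\rest b$ splits $p^\star$ into two nonzero pieces, producing a partition still in $\F$ but strictly refining $P^\star$, contradicting the infimum property. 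The subtlety is confirming that the modified partition really does remain in $\F$ (it does, since only the $b$-part changes), and, if one restricts $Part(\B)$ to infinite partitions, that the modification stays infinite --- which is automatic once $\A'$ is chosen infinite.
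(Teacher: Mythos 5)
Your proof is correct, and its first two parts are essentially the paper's argument in different clothing: your connected-component description of $P\vee Q$ (and of arbitrary suprema over the graph on $\bigcup_\alpha P_\alpha$) computes exactly the same elements as the paper's iterative saturation $p_0\leq q_0\leq p_1\leq\dots$, $u(p)=\Join p_n$, and your grouping of atoms into classes lying below the same member of each $P_\alpha$ yields the same infimum as the paper's choice-function meets $\Meet_\alpha f(\alpha)$ obtained from complete distributivity of the powerset algebra. Where you genuinely diverge is the direction ``complete $\Rightarrow$ atomic'': the paper disposes of it in two lines --- a complete lattice must have a least element, and a least partition must consist exclusively of atoms, since any non-atomic piece can be split to produce a strictly finer partition; as its join is $1_\B$, the algebra is atomic. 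You instead fix an atomless $b$, build the tailored family $\F$ of partitions extending a fixed partition $\A'$ of $-b$, and argue it has no infimum, which forces the extra ``replacement'' step to push a putative infimum into $\F$ before you can split. This works (and your attention to keeping everything infinite is resolved correctly, since splitting only enlarges an already infinite partition), but it is more laborious than necessary; in fact your $\F$ has no lower bounds at all, because a nonzero piece below $b$ cannot lie under a single member of every partition of $b$, so the detour through the greatest-lower-bound structure could be shortcut, and the paper's least-element observation buys the same conclusion with only your final splitting move. One shared caveat: both your component construction and the paper's iteration can output a finite supremum (e.g.\ for independent $P,Q$ the only component is everything), which the paper tolerates by admitting $\{1_\B\}$ as the unit of the partition lattice, so your version is no worse off on this point.
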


\begin{proof}
We show that, in fact, every system
$\setof{P_\alpha}{\alpha\in\kappa}\sub Part(\B)$ has a supremum.
Fix any $P$ from the system.
For $p\in P$, put $p_0 = p$ and inductively define
$$q_n = \Join\setof{q\in \Union{P_\alpha}}{q \parallel p_n}$$
$$p_{n+1} = \Join\setof{p\in P}{p \parallel q_n}.$$

It is clear that $p\leq p_n\leq q_n \leq p_{n+1} \leq q_{n+1}$
for each $n\in\omega$. Put $u(p) = \Join\setof{p_n}{n\in\omega}
= \Join\setof{q_n}{n\in\omega}$.
It is easily seen that the set $\Join{P_\alpha} = \setof{u_p}{p\in P}$
does not depend on the choice of the starting partition $P$.
Clearly, $\Join{P_\alpha}$ is a partition refined by every $P_\alpha$;
we show that it is the finest among such partitions,
and therefore a supremum of $\setof{P_\alpha}{\alpha\in\kappa}$.

Let $P_\alpha\preceq R$ for every $\alpha\in\kappa$.
It suffices to see that whenever $p\leq r$ for
some $p\in P_\alpha$ and $r\in R$, we also have $u(p) \leq r$;
this can be shown by induction for every $p_n, q_n$
as defined above. Let $p\in P$ and let $r$ be the only member of $R$
such that $p\leq r$. Every $q\in \Union{P_\alpha}$ is below exactly one
$r^\prime\in R$, and if $r\not=r^\prime$, then $q\perp p$; hence
$q_0 = \Join\setof{q\in \Union{P_\alpha}}{q\parallel p} \leq r$. Similarly,
$p_1 = \Join\setof{p\in P}{p\parallel q_0} \leq r$, and it follows
by induction that every $p_n\leq q_n \leq r$.
Hence $u(p) \leq r$ and $\Join{P_\alpha}\preceq R$.

For completeness,
let $\setof{P_\alpha}{\alpha\in\kappa}$ be a system of partitions.
A supremum $\Join{P_\alpha}$ exists in $Part(\B)$ by the above.
A complete atomic algebra is a~powerset algebra, which is completely
distributive. The partition $P =
\setof{\Meet_{\alpha\in\kappa}{f(\alpha)}}{f\in\prod{P_\alpha}} \setminus \{0\}$
is easily seen to be the infimum of the $P_\alpha$. In particular, the set
of all atoms is the finest partition of $\B$, i.e., the smallest element
of $Part(\B)$.
In the other direction, if $(Part(\B), \preceq)$ is complete, it must have
a~smallest element, which clearly needs to be a partition consisting
exclusively of atoms of $\B$.
\end{proof}

Note that for atomless algebras,
completeness is actually necessary in the previous observation:
the following example shows that in an atomless algebra $\B$
that is not $\sigma$-complete,
two partitions can always be found that do not have a supremum
in $Part(\B)$.

\begin{example}
Let $A = \setof{a_n}{n\in\omega}\sub\B$ be
a countable subset without a supremum in $\B$;
without loss of generality, $A$ is an antichain.
Let $\C$ be the completion of $\B$,
and consider $c = \Join^\C{A} \in \C\setminus\B$.
The element $-c\in\C$ can be partitioned into some
$\setof{x_\alpha}{\alpha\in\kappa} = X \sub\B$,
as $\B$ is dense in $\C$.

Split every $a_n\in A$ into $a_n^0\join a_n^1$,
put $b_0 = a_0^0$, $b_{n+1} = a_n^1\join a_{n+1}^0$
and $B = \setof{b_n}{n\in\omega}$.
Then clearly $\Join^\C{B} = \Join^\C{A} = c$.
Put $P = A \cup X, Q = B \cup X$.
Now $P,Q$ are partitions of $\B$,
and we show that $\{P,Q\}$ has no supremum in $Part(\B)$.

Let $R\in Part(\B)$ satisfy $P,Q\preceq R$.
Then there must be some $r\in R$ such that $r\geq a_n, b_n$ for all $n$;
but $r\in\B$ cannot be a supremum of $a_n$,
hence $r$ meets some $x\in X$. In fact, we have $x\leq r$,
as $X \sub P\cap Q$ and $P,Q\preceq R$.
Then the partition $R_0$ which contains $r-x, x \in R_0$ instead of $r\in R$
satisfies $P,Q\preceq R_0 \prec R$. Hence $R$ is not a supremum.
\end{example}

Recall that partitions $P,Q\in Part(\B)$ are {\it independent\/}
if $p\meet q \ne 0$ for every $p\in P$ and $q\in Q$. More generally,
$\setof{P_i}{i\in I}$ is an {\it independent system of partitions\/}
if for every finite $K\sub I$ and every $f\in\Pi\setof{P_i}{i\in K}$,
the intersection $\Meet\setof{f(i)}{i\in K}$ is nonzero.

Note that if $P,Q$ are independent,
then $P\join Q = \{1_B\}$ in $Part(\B)$.


\subsection{The structure induced by partitions}

Let $\B$ be a complete ccc Boolean algebra. For $P\in Part(\B)$,
let $\B_P$ be the subalgebra completely generated by $P\sub\B$.
Denote the inclusion as $e_P:\B_P\sub \B$.
If $P\preceq Q$, let $e_P^Q$ be the inclusion of $\B_Q$ in $\B_P$.
The family $\setof{\B_P}{P\in Part(\B)}$ together with the mappings
$e^P_Q$ forms a directed system of complete Boolean algebras
indexed by the directed set $(Part(\B), \succeq)$.


\begin{observation}
In the setting described above,
\begin{itemize}
\item [\rm(a)]
for each $P\in Part(\B)$,
the algebra $\B_P$ is isomorphic to $P(\omega)$;
\item [\rm(b)]
$\B_{P\meet Q}$ is completely generated by $\B_P\cup\B_Q$,
and $\B_{P\join Q} = \B_P\cap\B_Q$;
\item [\rm(c)]
$\B_P\cap\B_Q = \{0_\B, 1_\B\}$ iff $P\join Q = \{1_\B\}$;
\item [\rm(d)]
for $P\preceq Q$, the embedding $e_P^Q: \B_Q \sub \B_P$ is regular;
\item [\rm(e)]
for each $P\in Part(\B)$, the embedding $e_P: \B_P \sub \B$ is regular.
\end{itemize}
\end{observation}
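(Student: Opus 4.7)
The plan is to use throughout the characterisation that, for each $P\in Part(\B)$, the map $x\mapsto \setof{p\in P}{p\leq x}$ identifies $\B_P$ with $\P(P)$; in particular every element of $\B_P$ is uniquely a join of a subset of $P$. For (a), this is immediate once one notes that $P$ is countable (since $\B$ is ccc), so $\B_P\iso\P(P)\iso\P(\omega)$. For the first half of (b), $P\meet Q\sub\B_P\cup\B_Q$ places $\B_{P\meet Q}$ inside the complete subalgebra generated by $\B_P\cup\B_Q$; conversely each $p\in P$ equals $\Join\setof{p\meet q}{q\in Q,\,p\meet q\ne 0}$, a join of blocks of $P\meet Q$, so $P$ and symmetrically $Q$ lie in $\B_{P\meet Q}$.

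For the equality $\B_{P\join Q}=\B_P\cap\B_Q$, the inclusion $\sub$ follows since $P\preceq P\join Q$ gives $P\join Q\sub\B_P$ and symmetrically $P\join Q\sub\B_Q$. For the reverse inclusion, given $x\in\B_P\cap\B_Q$ and $p\in P$ with $p\leq x$, I would invoke the iterative construction of $u(p)\in P\join Q$ from the preceding observation: inductively, if $p_n\leq x$, then every $q\in Q$ with $q\parallel p_n$ meets $x$ and hence lies below $x$ (because $x\in\B_Q$ is a join of $Q$-blocks), so $q_n\leq x$, and then $p_{n+1}\leq x$ by the same argument applied with $x\in\B_P$; taking the join gives $u(p)\leq x$. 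Replacing $x$ by $-x\in\B_P\cap\B_Q$ handles the case $p\meet x=0$. Thus every block of $P\join Q$ is either below $x$ or disjoint from it, so $x=\Join\setof{u(p)}{p\in P,\,p\leq x}\in\B_{P\join Q}$.

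Part (c) is then immediate, since $\B_R=\{0_\B,1_\B\}$ forces $R=\{1_\B\}$. For (e), I would show $\B_P$ is closed under arbitrary joins taken in $\B$: writing each $x\in X\sub\B_P$ as $\Join P_x$ with $P_x\sub P$, one has $\Join^\B X=\Join^\B\Union_{x\in X}P_x\in\B_P$, so the inclusion is regular. For (d), $P\preceq Q$ gives $Q\sub\B_P$ and hence $\B_Q\sub\B_P$; under $\B_P\iso\P(P)$ the image of $\B_Q$ consists exactly of those subsets of $P$ that are unions of the $Q$-classes $\setof{p\in P}{p\leq q}$, and this family is trivially closed under arbitrary unions, so $e_P^Q$ is regular as well. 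The main obstacle is the reverse inclusion in (b): unlike the other parts it cannot be read off directly from the $\P(P)$-picture and requires the iterative $u(p)$-construction, with the induction alternating between exploiting $x\in\B_P$ and $x\in\B_Q$.
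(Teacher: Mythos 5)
The paper states this observation without proof, so there is no official argument to compare against; judged on its own, your proof is correct and is the natural one. The backbone --- identifying $\B_P$ with the powerset of $P$ via $x\mapsto\setof{p\in P}{p\leq x}$ (ccc gives $|P|=\omega$), noting that $\B_P$ is closed under joins computed in $\B$ (which immediately yields (e), and (d) via the unions-of-$Q$-classes picture), and handling the only delicate point, the inclusion $\B_P\cap\B_Q\sub\B_{P\join Q}$, by the alternating $u(p)$-induction from the paper's proof that $(Part(\B),\preceq)$ has suprema --- is exactly right, and the induction step (each $q\in Q$ meeting $p_n\leq x$ must lie below $x$ because $x\in\B_Q$, and symmetrically for $P$) is sound, as is the reduction of (c) to (b). One sentence needs repair: the literal claim $P\meet Q\sub\B_P\cup\B_Q$ is false, since $p\meet q$ in general belongs to neither $\B_P$ nor $\B_Q$; what your conclusion actually uses is only that each $p\meet q$, being the meet of an element of $\B_P$ with an element of $\B_Q$, lies in the complete subalgebra generated by $\B_P\cup\B_Q$, so that $\B_{P\meet Q}$ is contained in that subalgebra. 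With that rewording, combined with $p=\Join\setof{p\meet q}{q\in Q,\ p\meet q\neq 0}$ for the reverse containment, part (b) is complete, and the remaining parts stand as written (regularity for (d) and (e) follows since all the subalgebras involved compute suprema as in $\B$, and infima are handled by de Morgan).
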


\begin{lemma}
The algebra $\B$, with the regular embeddings $e_P:\B_P\to\B$,
is a direct limit of the directed system
of algebras $\B_P$ and mappings $e_P^Q$.
\end{lemma}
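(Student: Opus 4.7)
The plan is to verify the two defining properties of a direct limit for the cone $(\B, \{e_P : \B_P \to \B\})$ over the directed system $(\B_P, e_P^Q)$. The cone commutes in the obvious way: when $P \preceq Q$, the identity $e_Q = e_P \circ e_P^Q$ holds because all three maps are inclusions.

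The key auxiliary claim is that $\B = \bigcup_{P \in Part(\B)} \B_P$. For $b \in \B \setminus \{0_\B, 1_\B\}$, pick any infinite partition $R$ of $\B$, which exists because $\B$ is infinite and complete, and form its common refinement with $\{b, -b\}$, namely $R' = (R \meet \{b, -b\}) \setminus \{0\}$; this is still an infinite partition of $\B$, and $b = \Join\{r' \in R' : r' \leq b\}$ lies in $\B_{R'}$. The elements $0_\B, 1_\B$ are in every $\B_P$.

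For the universal property, given a compatible cone $(\C, \{f_P : \B_P \to \C\})$ with $f_Q = f_P \circ e_P^Q$ for $P \preceq Q$, define $h : \B \to \C$ by $h(b) = f_P(b)$ for any $P$ with $b \in \B_P$. Well-definedness uses observation (b): if $b \in \B_P \cap \B_{P'}$, then $b \in \B_{P \join P'}$, and compatibility of the $f$'s along $P \preceq P \join P'$ and $P' \preceq P \join P'$ gives $f_P(b) = f_{P \join P'}(b) = f_{P'}(b)$. The Boolean homomorphism property for $h$ on two elements $a \in \B_P$, $b \in \B_{P'}$ follows by pushing both into $\B_{P \meet P'}$, where $f_{P \meet P'}$ is already a homomorphism; uniqueness of $h$ is immediate from $\bigcup_P \B_P = \B$.

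The main obstacle, such as it is, is establishing the coverage $\B = \bigcup_P \B_P$, which needs the existence of a suitable infinite partition for any given element; the universal property is then a standard direct-limit argument, with observation (b) translating between intersections and generated joins of subalgebras on the one hand, and joins and meets of partitions on the other.
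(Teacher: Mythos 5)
The paper states this lemma without proof, so there is no official argument to compare against; your strategy --- first showing $\B=\bigcup_{P\in Part(\B)}\B_P$, then gluing a compatible cone $\{f_P\}$ into a map $h$ on the union --- is the natural one and is essentially correct. The coverage step is fine: the common refinement $R'=(R\meet\{b,-b\})\setminus\{0\}$ of an infinite partition $R$ with $\{b,-b\}$ is again an infinite (by ccc, countable) partition, and $b=\Join\setof{r\in R'}{r\leq b}\in\B_{R'}$, while $0_\B,1_\B$ lie in every $\B_P$.

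One step, as written, does not go through: for well-definedness of $h$ you route through $\B_{P\join P'}$ and invoke the cone map $f_{P\join P'}$. But the index set of the directed system is $Part(\B)$, the set of \emph{infinite} partitions, and $P\join P'$ need not be infinite --- if $P$ and $P'$ are independent then $P\join P'=\{1_\B\}$ (as the paper itself notes), and in general the join can be any finite partition. In such cases $f_{P\join P'}$ is simply not part of the given cone, so the equalities $f_P(b)=f_{P\join P'}(b)=f_{P'}(b)$ are not available. The repair is immediate and uses only the downward directedness you already exploit for the homomorphism property: if $b\in\B_P\cap\B_{P'}$, then $P\meet P'$ is a genuine index with $P\meet P'\preceq P$ and $P\meet P'\preceq P'$, and cone compatibility gives $f_P(b)=f_{P\meet P'}(b)=f_{P'}(b)$; Observation (b) on $\B_{P\join Q}=\B_P\cap\B_Q$ is not needed at all. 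With that substitution (and the understanding that the limit is taken in the category of Boolean algebras and homomorphisms, which is what the Stone-dual corollary uses), your proof is complete.
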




For $P\in Part(\B)$, let $\J_P$ be the ideal on $\B$ generated by $P\sub\B$.
Note that $\J_{P \meet Q} = \J_P \cap \J_Q$
and $\J_P \sub \J_Q$ for $P\preceq Q$.
Write $\B/P$ for $\B/\J_P$ and $\B_P/P$ for $\B_P/\J_P$.
Whenever $P\preceq Q\in Part(\B)$, we have $\J_P\sub\J_Q$,
hence the algebra $\B/Q$ is a quotient of $\B/P$;
denote the quotient mapping by $f_P^Q:\B/P \to \B/Q$.
The family of algebras ${\B/P}$ and mappings $f_P^Q$ for $P,Q\in Part(\B)$
forms an inverse system indexed by $(Part(\B), \succeq)$.

%
%

\begin{observation}
In the setting described above,
\begin{itemize}
\item [\rm(a)]
for each $P\in Part(\B)$, the quotient $\B_P/P$
is isomorphic to $P(\omega) / fin$;
\item [\rm(b)]
the inclusion $\B_P/P \sub \B/P$ is a regular embedding.
\end{itemize}
\end{observation}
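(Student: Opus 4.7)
The plan is to use the earlier observation that $\B_P \iso P(\omega)$, together with an explicit description of $\J_P$. Since $\B$ is ccc and $P$ is infinite, $P$ is countable; fix an enumeration $P = \setof{p_n}{n\in\omega}$, so that the isomorphism $\B_P \iso P(\omega)$ sends $A\sub\omega$ to $\Join_{n\in A}p_n$. An element $a = \Join_{n\in A}p_n\in\B_P$ lies in $\J_P$ exactly when $a$ is below some finite join from $P$, i.e.\ when $A$ is finite; hence $\J_P\cap\B_P$ corresponds to $fin$ under the isomorphism, which yields (a).

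For (b), the map $\B_P/P \to \B/P$ is automatically a well-defined injection, so it suffices to verify that every supremum existing in $\B_P/P$ remains a supremum in $\B/P$. Suppose $\setof{[A_i]_P}{i\in I}\sub\B_P/P$ has supremum $[A]_P$ in $\B_P/P$, and let $[b]_P\in\B/P$ (for some $b\in\B$, not a priori in $\B_P$) be any upper bound. The key device is the bridge $B = \setof{n\in\omega}{p_n\leq b}$: unpacking $[A_i]_P\leq[b]_P$ via the description of $\J_P$ shows that $\setof{n\in A_i}{p_n\not\leq b}$ is finite, i.e.\ $A_i \sub^{*} B$. Thus $[B]_P$ is already an upper bound of $\setof{[A_i]_P}{i\in I}$ inside $\B_P/P$, forcing $[A]_P\leq[B]_P$, i.e.\ $A \sub^{*} B$; unwinding this back into $\B/P$ then gives $[A]_P\leq[b]_P$, as required.

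I do not expect serious difficulty here: once the dictionary $\B_P \iso P(\omega)$ with $\J_P\cap\B_P \iso fin$ is in place, the task reduces to the elementary combinatorial manipulation above, with the bridge $B$ doing all of the work. The ccc hypothesis is used precisely to guarantee that $P$ is countable and that the ``$\J_P = $ finite support on $P$'' description is correct; without it, already (a) would fail.
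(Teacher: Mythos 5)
Your proof is correct and is the expected argument for this observation, which the paper itself states without proof: the dictionary $\B_P\iso P(\omega)$ with $\J_P\cap\B_P$ corresponding to $fin$, plus the bridge set $B=\setof{n\in\omega}{p_n\leq b}$ showing that any upper bound $[b]_P$ in $\B/P$ dominates some element of $\B_P/P$ above all the $[A_i]_P$, does establish (a) and the preservation of suprema required for (b). One cosmetic quibble: ccc is used only to make $P$ countable; the description of $\J_P$ as the downward closure of finite joins from $P$ is just the definition of the generated ideal and does not depend on ccc.
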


\begin{lemma}
The algebra $\B$, with the quotient mappings $f_P:\B\to\B/P$,
is an inverse limit of the inverse system
of algebras $\B/P$ and mappings $f_P^Q$.
\end{lemma}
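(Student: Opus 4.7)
The plan is to verify that the canonical map $\phi\colon\B\to\lim_{\leftarrow}\B/P$ given by $\phi(b)=\seqof{f_P(b)}{P\in Part(\B)}$ is a Boolean isomorphism; the universal property of the inverse limit then follows formally.

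First I would check compatibility: since $\J_P\sub\J_Q$ for $P\preceq Q$, the algebra $\B/Q$ is a further quotient of $\B/P$, the factorization $f_P^Q\circ f_P=f_Q$ holds, and each $\phi(b)$ is a coherent thread. Thus $\phi$ is a well-defined Boolean homomorphism.

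For injectivity I need $\bigcap_{P\in Part(\B)}\J_P=\{0\}$. Given $b\neq 0$, I would split $b$ into a countable antichain of nonzero pieces (available in a complete ccc atomless algebra) and extend by any partition of $-b$ to obtain an infinite partition $P\in Part(\B)$; then $b$ is not dominated by any finite join of elements of $P$, so $b\notin\J_P$ and $f_P(b)\neq 0$.

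For surjectivity, given a coherent thread $\seqof{b_P}{P}$, I would fix lifts $\tilde b_P\in\B$; coherence translates to $\tilde b_P\symd\tilde b_Q\in\J_Q$ whenever $P\preceq Q$. The cosets $S_P=\setof{x\in\B}{x\symd\tilde b_P\in\J_P}$ then satisfy $S_P\sub S_Q$ for $P\preceq Q$. Using completeness of $\B$, I would form a candidate $b$ from the net $\seqof{\tilde b_P}{P}$ by an expression such as $\Meet_P\Join_{Q\preceq P}\tilde b_Q$ and verify by a ccc argument that $b\in\bigcap_P S_P$; this is the main obstacle I anticipate, since it requires that a single element be compatible with every $\J_P$ simultaneously. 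Once $\phi$ is bijective, its inverse provides the unique factorization $h\colon\A\to\B$ required whenever an algebra $\A$ is given with compatible maps $g_P\colon\A\to\B/P$, establishing the universal property.
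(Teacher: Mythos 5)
Since the paper states this lemma without an explicit proof, your proposal has to stand on its own, and its overall frame is the right one: prove that the canonical thread map $\phi(b)=\seqof{f_P(b)}{P\in Part(\B)}$ is an isomorphism, after which the universal property is formal. The compatibility step is correct, and so is injectivity: for $b\neq 0$ one splits $b$ into an infinite antichain and completes it to a partition $P$, so that $b$ is above infinitely many members of $P$ and hence not below any finite join of them, i.e.\ $b\notin\J_P$. You are also right that atomlessness is genuinely needed here (an atom lies below a single member of every partition, hence belongs to every $\J_P$, so the $f_P$ are not jointly injective on an atomic part); that hypothesis is implicit in the paper's setting.

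The gap is surjectivity, and it is not a small one: realizing every coherent thread by an element of $\B$ is the entire content of the lemma, and your proposal contains no argument for it. Worse, the specific candidate $b=\Meet_P\Join_{Q\preceq P}\tilde b_Q$ cannot work as written, because it depends on the arbitrary choice of lifts: each $\tilde b_Q$ is only determined modulo $\J_Q$, and by padding the lifts with elements of $\J_Q$ one can make $\Join_{Q\preceq P}\tilde b_Q=1$ for every $P$, so the formula returns $1$ no matter what the thread is (dually, trimming the lifts can drive the analogous $\Join_P\Meet_{Q\preceq P}\tilde b_Q$ to $0$ even for a thread that is realized by some $b$). What is actually required is a proof that $\bigcap_P S_P\neq\emptyset$ for the directed family of cosets $S_P=\setof{x\in\B}{x\symd\tilde b_P\in\J_P}$; since the ideals $\J_P$ are not closed under countable joins, this is not a formal completeness or compactness fact. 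The usable handle is the concrete description of $\J_P$ under ccc: $x\symd\tilde b_P\in\J_P$ says exactly that $x\meet p=\tilde b_P\meet p$ for all but finitely many $p\in P$, so the thread prescribes, for each (countable) partition $P$, the trace $(b\meet p)_{p\in P}$ up to finitely many coordinates, and the real work is to glue these mod finite traces coherently across common refinements into one element of $\B$, using completeness. None of that gluing is carried out or even sketched; your own remark that this is ``the main obstacle'' is an accurate self-assessment, and as it stands the proof is missing precisely at the decisive step.
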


Employing the Stone duality, we can summarize that

\begin{corollary}
\begin{itemize}
\item [\rm(a)]
Every infinite complete ccc algebra is a limit
of a directed system of copies of $P(\omega)$.
Dually, every infinite ccc extremally disconnected compact space
is an inverse limit of an inverse system of copies of $\beta\omega$.
\item [\rm(b)]
Every infinite complete ccc Boolean algebra is an inverse limit
of an inverse system of copies of $P(\omega)/fin$.
Dually, every infinite ccc extremally disconnected compact space
is a direct limit of a system of copies of $\omega^*$.
\end{itemize}
\end{corollary}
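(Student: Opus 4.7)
The strategy is to deduce both parts of the corollary by combining the two lemmas stated just above with the identifications $\B_P \iso P(\omega)$ and $\B_P/P \iso P(\omega)/fin$ recorded in the observations, and then transport everything across Stone duality.

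For part (a), I would invoke the first Lemma directly: $\B$, with the regular embeddings $e_P : \B_P \hookrightarrow \B$, is the direct limit of the directed system $(\B_P, e_P^Q)$ indexed by $(Part(\B), \succeq)$. Since $\B$ is ccc, every partition $P$ is countable, and the corresponding observation then gives $\B_P \iso P(\omega)$. So $\B$ is a direct limit of copies of $P(\omega)$, as claimed. Applying the contravariant Stone functor, which turns direct limits of complete Boolean algebras into inverse limits of the dual extremally disconnected compacta, and identifying $\St(P(\omega))$ with $\beta\omega$, yields the topological reformulation.

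For part (b), I would use the second Lemma, which presents $\B$ as the inverse limit of $(\B/P, f_P^Q)$. The quotients $\B/P$ are not themselves copies of $P(\omega)/fin$, but each carries $\B_P/P \iso P(\omega)/fin$ as a regular subalgebra. My plan is to reduce the inverse system to one built from the $\B_P/P$ with the same limit: one verifies that the quotient maps $f_P^Q$ restrict, or can be coherently replaced, so as to form an inverse system of $P(\omega)/fin$-copies whose limit is still $\B$. Once this is in hand, dualising through $\St(P(\omega)/fin) = \omega^*$ gives the statement about direct limits of $\omega^*$ in the Stone-space category.

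The delicate step, and where I would expect the real work, is the reduction in part (b): one has to watch the directions of the indexing carefully, since finer $P$ yield larger $\B_P$ but more collapsed $\B/P$, so the restrictions of $f_P^Q$ to the $\B_P/P$-subalgebras need to be arranged compatibly. Modulo this compatibility check, both parts are essentially restatements of the two lemmas with the observations plugged in.
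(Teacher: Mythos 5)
Your part (a) is fine and coincides with what the paper does: the corollary is stated as a direct summary, and since $\B$ is ccc every infinite partition is countable, so $\B_P\iso P(\omega)$, and the first lemma together with the contravariant Stone duality (direct limits of algebras to inverse limits of spaces, $\St(P(\omega))=\beta\omega$) gives the claim.

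The gap is in part (b), and it sits exactly at the step you defer. You are right that the second lemma alone is not enough: its objects are the quotients $\B/P$, which are not copies of $P(\omega)/fin$ (below a partition $P$ refining $Q$ infinitely, $\B/\J_P$ is essentially the reduced product of the relative algebras $\B\rest p$ modulo finite support, which is much larger than $P(\omega)/fin$ unless the pieces are atoms). But your proposed repair --- that the maps $f_P^Q$ ``restrict, or can be coherently replaced'' so the system runs through the copies $\B_P/P$ --- is not a routine compatibility check; the restriction provably fails. Take $P\preceq Q$ with each $q_n\in Q$ split into infinitely many $p_n^m\in P$, and let $b=\Join\setof{p_n^0}{n\in\omega}\in\B_P$. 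For any $c=\Join\setof{q_n}{n\in A}\in\B_Q$: if $A$ is infinite then $c-b\geq\Join_{n\in A}(q_n-p_n^0)$ meets infinitely many members of $Q$, and if $A$ is finite then $b-c\geq\Join_{n\notin A}p_n^0$ does; either way $b\symd c\notin\J_Q$, so $f_P^Q([b]_{\J_P})$ does not lie in the subalgebra $\B_Q/Q\sub\B/Q$. Nor does the refinement map $P\to Q$ (each $p$ to the unique $q\geq p$) induce a homomorphism between $P(P)/fin$ and $P(Q)/fin$ in either direction: it is not finite-to-one, so preimages do not respect the ideal $fin$, and direct images are not homomorphisms. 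So the ``coherent replacement'' you appeal to is precisely the missing content of (b), not a detail to be checked afterwards. To be fair, the paper itself offers nothing beyond ``employing the Stone duality'' at this point, so you have correctly located the real issue; but your proposal does not resolve it, and as written part (b) remains unproved.
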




For an ultrafilter $\U$ on $\B$ and $P$ a partition of $\B$,
let $\U_P = \U \cap \B_P$, which is clearly an ultrafilter on $\B_P$.
As $\B_P$ is isomorphic to $P(\omega)$, the ultrafilter $\U_P$
can be viewed as an ultrafilter on $\omega$.

\begin{observation}
Let $\B$ be a complete atomless ccc algebra,
let $P,Q$ be partitions of $\B$,
and let $\U$ be an ultrafilter on $\B$.
Then
\begin{itemize}
\item [\rm(a)]
$P\cap\U \not=\emptyset$ if and only if $\U_P$ is trivial.
\item [\rm(b)]
$\setof{P\in Part(\B)}{\U \cap P = \emptyset}$
is an open dense subset of $(Part(\B), \preceq)$.
\item [\rm(c)]
$\U_Q = \U_P \cap \B_Q$ for $P\preceq Q$.
\item [\rm(d)]
$\B = \Union\setof{\B_P}{P\cap\U=\emptyset}$
\end{itemize}
\end{observation}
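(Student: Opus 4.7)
The plan is to work through (a)--(d) in order, leaning on one key lemma that I expect to carry the load: in an atomless complete ccc algebra $\B$, every $p\in\U$ admits a countably infinite partition of $p$ all of whose members lie outside $\U$. Granting this, clause (a) is immediate: $\B_P$ is completely generated by $P$ and isomorphic to $P(\omega)$, so the elements of $P$ are exactly the atoms of $\B_P$; an ultrafilter on a powerset algebra is principal precisely when it contains an atom, so $\U_P=\U\cap\B_P$ is trivial iff some $p\in P$ lies in $\U$.

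For (b), openness follows directly from the definition of $\preceq$: if $Q\preceq P$ and $q\in Q\cap\U$, then the unique $p\in P$ above $q$ also lies in $\U$. For density, given $P$ with $P\cap\U\ne\emptyset$, the intersection is a singleton $\{p\}$ because disjoint elements cannot both sit in an ultrafilter; I would replace $p$ by the partition supplied by the lemma to obtain $Q\preceq P$ with $Q\cap\U=\emptyset$, since all other members of $P$ lie below $-p$ and hence outside $\U$. The lemma itself I would prove by a Zorn argument: fix a maximal antichain $A$ inside $\setof{x\in\B}{x\leq p, x\notin\U}$, countable by ccc. If $\Join A<p$, atomlessness splits $p-\Join A$ into two nonzero pieces, at least one of which lies outside $\U$, contradicting maximality; and $A$ cannot be finite, for then $p=\Join A$ would fail to lie in $\U$ by the ultrafilter property, contradicting $p\in\U$.

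For (c), the refinement $P\preceq Q$ writes each $q\in Q$ as a join of elements of $P$, so $\B_Q\sub\B_P$ and hence $\U_Q=\U\cap\B_Q=(\U\cap\B_P)\cap\B_Q=\U_P\cap\B_Q$. For (d), given $b\in\B$ with $0_\B<b<1_\B$, exactly one of $b,-b$ lies in $\U$; apply the lemma to that one and take any countably infinite partition of the other (whose pieces automatically avoid $\U$, being below a non-$\U$ element). Their union is a partition $P\in Part(\B)$ with $P\cap\U=\emptyset$ and $b\in\B_P$; the cases $b\in\{0_\B,1_\B\}$ follow from density in (b). The main obstacle throughout is the key lemma stated at the outset; the rest reduces to unpacking definitions and routine ultrafilter bookkeeping.
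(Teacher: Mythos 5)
Your proposal is correct, and all four clauses are handled soundly: the key lemma (every $p\in\U$ splits into a countably infinite partition avoiding $\U$, by atomlessness, ccc, and the fact that an ultrafilter cannot avoid every member of a finite partition of one of its elements) is exactly the right engine, and its applications in (b) and (d), as well as the routine arguments for (a) and (c), are accurate. The paper states this observation without any proof, treating it as routine, so there is no argument to compare against; your write-up supplies the standard details one would expect.
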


\section{Coherent ultrafilters}\label{coherent}

\begin{definition}
Let $\B$ be a complete, atomless, ccc algebra. For a property $\phi$
of families of subsets of $\omega$, we say that a subset $X\sub\B$ is
a {\it coherent $\phi$-family\/} on $\B$ if
for every partition $P=\setof{p_n}{n\in\omega}$ of $\B$,
the family $\setof{A\sub\omega}{\Join\setof{p_n}{n\in A}\in X}$
of subsets of $\omega$ satisfies $\phi$.
\end{definition}

For some properties $\phi$, the {\em coherent\/} $\phi$
is actually no stronger than $\phi$ itself.
As an easy example, any antichain in $\B$ is a coherent antichain;
and any filter $\F$ on $\B$ is a coherent filter,
as for every partition $P$ of $\B$, the family
$\setof{A\sub\omega}{\Join\setof{p_n}{n\in A}\in \F}$
is a filter on $\omega$. Similarly,
every ultrafilter on $\B$ is a coherent ultrafilter,
and an ultrafilter that is coherently trivial is
a generic ultrafilter on $\B$.
We are interested in ultrafilters with special properties,
for which the coherent version becomes nontrivial.

\medskip
It can be seen from the very definition that the ZFC implications
between various classes of ultrafilters on $\omega$ continue
to hold for the corresponding classes of coherent ultrafilters on $\B$.
For instance, every coherent selective ultrafilter on $\B$ is a coherent
$P$-ultrafilter on $\B$, as every selective ultrafilter on $\omega$
is a $P$-ultrafilter on $\omega$.

\subsection{Coherent P-ultrafilters}


\begin{definition}
An ultrafilter $\U$ on a complete ccc algebra $\B$
is a {\em coherent $P$-ultrafilter\/}
if for every partition $P$ of $\B$, the family
$\setof{A\sub\omega}{\Join\setof{p_n}{n\in A}\in \U}$
is a $P$-ultrafilter on $\omega$
\end{definition}

Seeing that the subalgebra $\B_P$ is a copy of $P(\omega)$,
we can equivalently characterize coherent $P$-ultrafilters as follows.

\begin{observation}\label{coherentP}
Let $\B$ be a complete ccc algebra. An ultrafilter $\U$ on $\B$ is
a coherent $P$-ultrafilter iff for every pair of partitions $P$ and $Q$
of $\B$ such that $P \preceq Q$, either $\U \cap Q \neq \emptyset$,
or there is a set $X \sub P$ such that $\Join{X} \in \U$ and for every
$q\in Q$, the set $\setof{p\in X}{p \meet q \neq 0}$ is finite.
\end{observation}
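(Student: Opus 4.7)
The plan is to transfer the standard partition characterization of $P$-ultrafilters on $\omega$ through the isomorphism $\B_P \iso P(\omega)$. Recall that an ultrafilter $\V$ on $\omega$ is a $P$-ultrafilter iff for every partition $\setof{C_n}{n\in\omega}$ of $\omega$, either some $C_n \in \V$, or there is $A\in\V$ with $A\cap C_n$ finite for all $n$. I would fix $P=\setof{p_n}{n\in\omega}\in Part(\B)$ and identify $\B_P$ with $P(\omega)$ via $p_n\leftrightarrow\{n\}$, so that $\U_P$ becomes an ultrafilter on $\omega$. The goal is then to show that the criterion above, quantified over all partitions of $\omega$, corresponds precisely to the condition of the observation, quantified over all $Q\succeq P$ in $Part(\B)$.

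The key step is the bijection between partitions of $\omega$ and partitions $Q$ of $\B$ satisfying $P\preceq Q$: a partition $\setof{C_q}{q}$ of $\omega$ yields $Q=\setof{\Join\setof{p_n}{n\in C_q}}{q}$, and conversely each $Q\succeq P$ assigns to $q\in Q$ the set $C_q=\setof{n}{p_n\leq q}\sub\omega$, giving back a partition of $\omega$. Under this bijection, $C_q\in\U_P$ iff $q=\Join\setof{p_n}{n\in C_q}\in\U$, so ``some $C_q\in\U_P$'' becomes ``$\U\cap Q\neq\emptyset$''. Likewise, a set $A\in\U_P$ corresponds to $X=\setof{p_n}{n\in A}\sub P$ with $\Join X\in\U$, and ``$A\cap C_q$ finite for every $q$'' translates to ``$\setof{p\in X}{p\leq q}$ finite for every $q$''.

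The only point worth noting is that, since $P\preceq Q$, each $p\in P$ lies below a unique element of $Q$, so for $q\in Q$ the relation $p\meet q\neq 0$ coincides with $p\leq q$. This lets the observation phrase the finite-intersection condition as ``$\setof{p\in X}{p\meet q\neq 0}$ finite'', matching the translated criterion without reference to the isomorphism. Both directions of the equivalence then follow by reading the correspondence forward and backward; I expect no real obstacle beyond careful bookkeeping of this translation.
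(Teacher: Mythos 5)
Your proposal is correct and is exactly the argument the paper intends: the observation is stated without an explicit proof, prefaced only by the remark that $\B_P$ is a copy of $P(\omega)$, and your translation via the bijection between partitions of $\omega$ and partitions $Q\succeq P$ (noting $q=\Join\setof{p\in P}{p\leq q}$ and that $p\meet q\neq 0$ reduces to $p\leq q$) is precisely that identification carried out in detail.
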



It should probably be noted explicitly that
as the $P$-point condition is only evaluated in the subalgebras $\B_P$,
a coherent $P$-ultrafilter on $\B$ is not, topologically,
a $P$-point in the Stone space of $\B$ ---
unless $\B$ happens to be $P(\omega)$ itself.

\medskip
We show now that coherent $P$-points consistently exist.
The proof is an iteration of the Ketonen argument of \cite{K}
for the existence of $P$-points on $\omega$.

\begin{proposition}
Let $\B$ be a complete ccc Boolean algebra of size at most $\c$.
Every filter on $\B$ with a base smaller than $\c$
can be extended to a coherent $P$-ultrafilter on $\B$
if and only if $\c = \d$.
\end{proposition}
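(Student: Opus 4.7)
The plan is to lift Ketonen's classical $P$-point argument \cite{K} to $\B$ via the regular projections onto the $P(\omega)$-copies $\B_P$ from Section~\ref{partitions}, and for the reverse implication to specialise to $\B=P(\omega)$.

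For $(\Leftarrow)$, assume $\c=\d$. I would enumerate in a sequence of length $\c$ all pairs $(P,Q)\in Part(\B)^2$ with $P\preceq Q$ (possible since $|Part(\B)|\leq|\B|^\omega=\c^\omega=\c$), interleaved with all elements $b\in\B$ for ultrafilter decisions. Starting from the given $\F_0$, I build a $\sub$-increasing chain of proper filters $\F_\alpha$, each with base of size $<\c$, taking unions at limits. At a successor stage treating $(P_\alpha,Q_\alpha)$, if some $q\in Q_\alpha$ is compatible with $\F_\alpha$, adjoin such a $q$; otherwise every $q\in Q_\alpha$ is killed by some $f_q\in\F_\alpha$, and I pass to the image filter $\hat\F_\alpha = h_{P_\alpha}[\F_\alpha]$ on the regular subalgebra $\B_{P_\alpha}\iso P(\omega)$, where $h_{P_\alpha}(b)=\Join\setof{p\in P_\alpha}{p\meet b\neq 0}$. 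Regularity of $e_{P_\alpha}$ ensures that an element of $\B_{P_\alpha}$ meets every $f\in\F_\alpha$ iff it meets every element of $\hat\F_\alpha$, so $\hat\F_\alpha$ is a proper filter of base size $<\c$ on $P(\omega)$, and in the case under consideration $-q\in\hat\F_\alpha$ for every $q\in Q_\alpha$.

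This reduces matters to Ketonen's combinatorial setup on $\omega$: a filter of base size $<\d$ that rejects every block of a countable partition $\setof{B_i}{i\in\omega}$. I would enumerate a base $\setof{F_\beta}{\beta<\kappa}$ of $\hat\F_\alpha$ closed under finite intersections, with $\kappa<\c=\d$; define for each $\beta$ a function $g_\beta\in\omega^\omega$ measuring how $F_\beta$ escapes the initial segments $B_0\cup\dots\cup B_n$ of the partition (these escapes exist because $-(B_0\cup\dots\cup B_n)\in\hat\F_\alpha$); dominate $\{g_\beta\}_{\beta<\kappa}$ by some $h\in\omega^\omega$ using $\kappa<\d$; and diagonally construct $Y\in[\omega]^\omega$ meeting each $F_\beta$ in an infinite set and each $B_i$ in a finite set. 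Lifting yields $X=\setof{p_n}{n\in Y}\sub P_\alpha$ with $\Join X\in\B_{P_\alpha}$ compatible with $\F_\alpha$, and I set $\F_{\alpha+1}=\langle\F_\alpha\cup\{\Join X\}\rangle$. The concurrent element $b_\alpha$ is decided by compatibility in the standard way, and since each stage adds a single new generator, bases stay below $\c$ throughout the induction.

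For $(\Rightarrow)$, apply the extension property to $\B=P(\omega)$, a complete ccc algebra of size $\c$. Partitions of $P(\omega)$ are partitions of $\omega$ and $\B_P=\B$ for every such partition, so a coherent $P$-ultrafilter on $P(\omega)$ is just a $P$-point on $\omega$. The extension property thus yields the classical statement that every small-base filter on $\omega$ extends to a $P$-point, which by \cite{K} is equivalent to $\c=\d$. The main technical obstacle in the forward direction is the second case above: one must verify that $h_{P_\alpha}$ transports compatibility cleanly between $\F_\alpha$ and $\hat\F_\alpha$ via the regularity of $e_{P_\alpha}$, and then execute the dominating-function diagonalisation carefully enough that the resulting $Y$ simultaneously meets every base element of $\hat\F_\alpha$ and almost-disjointly hits every partition block. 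The algebraic packaging is new; the combinatorial heart of the argument is Ketonen's.
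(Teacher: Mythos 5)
Your argument follows essentially the same route as the paper's: a transfinite induction of length $\c$ over all partition pairs $P\preceq Q$, in which the successor step is Ketonen's dominating-family diagonalisation executed inside the copy $\B_P\iso P(\omega)$. The paper performs the same diagonalisation directly in $\B$, setting $f_\xi(n)=\min\setof{m}{a_\xi\meet p_n^m\neq 0}$; this is exactly the trace on the $n$-th block of your projection $h_{P}(a_\xi)=\Join\setof{p\in P}{p\meet a_\xi\neq 0}$, and your observation that an element of $\B_P$ meets $a_\xi$ iff it meets $h_P(a_\xi)$ is the correct (and standard) justification for why compatibility transfers back from $\hat\F_\alpha$ to $\F_\alpha$. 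So the forward direction is sound and is the paper's proof in different packaging.

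One correction is needed in your $(\Rightarrow)$ direction. The proposition fixes $\B$, so you cannot simply instantiate the hypothesis at $\B=P(\omega)$; moreover your supporting claim that ``$\B_P=\B$ for every partition of $\omega$'' is false --- $\B_P$ is only the subalgebra of unions of blocks of $P$, which is proper unless $P$ is the partition into singletons (that single partition does suffice to see that a coherent $P$-ultrafilter on $P(\omega)$ is a $P$-point, but it does not help with a general $\B$). The routine repair, which is what the paper does, is to transport: fix any countably infinite partition $P$ of the given $\B$, copy a small filter on $\omega$ into $\B_P\iso P(\omega)$, extend the filter it generates in $\B$ to a coherent $P$-ultrafilter $\U$, and note that $\setof{A\sub\omega}{\Join\setof{p_n}{n\in A}\in\U}$ is then a $P$-point on $\omega$ extending the original filter; generic existence of $P$-points gives $\c=\d$ by Ketonen. (The paper additionally records the explicit converse construction: from a dominating family of size $\d<\c$ one builds, inside any such $\B$ and for any prescribed pair $P\preceq Q$, a centered family of size $\d$ that no coherent $P$-ultrafilter can extend.) With that adjustment your proof is complete.
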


\begin{proof}
Assume $\c = \d$ and let $\F\sub\B$ be a filter with a base smaller than $\c$.
We will construct an increasing chain of filters $\F_\alpha$ extending $\F$,
eventually arriving at a filter $\Union \F_\alpha$, where each $\F_\alpha$
takes care of a pair of partitions, as per \ref{coherentP}.

Start with $\F_0 = \F$ and enumerate all partition pairs
$P \preceq Q$ as $(P_\alpha, Q_\alpha)$, where $\alpha < \d$
runs through all isolated ordinals.
If an increasing chain $\seqof{\F_\beta}{\beta < \alpha}$
of filters has already been found such that every $\F_\beta$
has a base smaller than $\c$ and has the $P$-ultrafilter property
\ref{coherentP} with respect to the partition pairs
$P_\gamma \preceq Q_\gamma$ for $\gamma < \beta$, proceed as follows.

If $\alpha$ is a limit, take for $\F_\alpha$ the filter generated by
$\Union\setof{\F_\beta}{\beta<\alpha}$; then $\F_\alpha$ still has
a base smaller than $\c=\d$. We didn't miss a partition pair here.

If $\alpha = \beta+1$ is a successor,
consider the partition pair $P_\beta \preceq Q_\beta$.
If some $q\in Q_\beta$ is compatible with $\F_\beta$,
let $\F_\alpha = \F_{\beta+1}$ be the filter generated by $\F_\beta \cup \{q\}$
and be done with $(P_\beta, Q_\beta)$.
If there is no such $q$ in $Q_\beta$, enumerate $Q_\beta$ as
$\setof{q_n}{n\in\omega}$ and consider the refinement $P_\beta$
of $Q_\beta$. Without loss of generality, every $q_n\in Q_\beta$ is
partitioned into infinitely many $p\in P_\beta$; 
enumerate $\setof{p\in P}{p < q_n}$ as $\setof{p_n^m}{m\in\omega}$.
Let $\setof{a_\xi}{\xi < \kappa}$ be a base of $\F_\beta$,
for some $\kappa < \c$.

Now perform the Ketonen construction for this step:
for each $\xi < \kappa$, put
$f_\xi(n) = \min\setof{m}{a_\xi \meet p_n^m \neq 0}$
if there is such an $m$.
The value of $f_\xi(n)$
is defined for infinitely many $n$, corresponding to those $q_n$ which
$a_\xi$ meets. In the missing places, fill the value of $f_\xi(n)$
with the {\it next\/} defined value (there must be some).
This yields a family
$\setof{f_\xi:\omega\to\omega}{\xi < \kappa}$ of functions 
--- which cannot be dominating, as $\kappa < \c = \d$. Therefore, there
is a function $f:\omega\to\omega$ which is not dominated by any $f_\xi$;
that is, for each $\xi$, we have $f(n) > f_\xi(n)$ for infinitely many $n$.
We can assume that $f$ is strictly increasing.

Put $a = \Join\setof{p_n^m}{n\in\omega, m \leq f(n)}$. The element $a$
is compatible with $\F_\beta$, because it meets every $a_\xi$,
as witnessed by $f\not\leq f_\xi$. Let $\F_\alpha$ be the filter generated
by $\F_\beta \cup \{a\}$. This filter
obviously extends $\F_\beta$, is generated by fewer than $\c$ elements,
and has the $P$-ultrafilter property with respect to $(P_\beta, Q_\beta)$.

Now every ultrafilter extending $\Union\setof{\F_\alpha}{\alpha < \c}$
is a coherent $P$-ultrafilter on $\B$ that extends $\F$, because we have
taken care of all possible partition pairs $P\preceq Q$, as requested
by \ref{coherentP}.

The other direction follows from \cite{K} immediately.
Being able to extend every small filter $\F\sub\B$
into a coherent $P$-ultrafilter is apparently stronger than
being able to extend every small filter $\F$ on $\omega$ to a $P$-point,
which itself implies $\c = \d$.
\end{proof}

For completeness, we translate the Ketonen argument
for the opposite direction into the algebra $\B$,
showing how $\d < \c$ can break the coherence {\em anywhere\/}.

Assume $\d < \c$ and let $\setof{f_\alpha}{\alpha < \d}$
be a dominating family of functions.
Choose any two countable partitions $P \preceq Q$ of $\B$
such that every $q_n \in Q$ is partitioned into countably many $p_n^m \in P$.
For each $\alpha < \d$, put $a_\alpha = \Union\setof{p_n^m}{m > f_\alpha(n)}$.
The family $\setof{a_\alpha}{\alpha<\d} \cup \setof{-q_n}{n\in\omega} \sub \B$
is centered, and the filter $\F$ that it generates has $\d < \c$ generators.
No ultrafilter on $\B$ that extends $\F$ can be a coherent $P$-ultrafilter,
as witnessed by $P\preceq Q$.

\medskip
We have shown that coherent $P$-ultrafilters consistently exist
on complete ccc algebras of size not exceeding the continuum.
On the other hand, there consistently is no coherent $P$-ultrafilter
on any complete ccc algebra, as even the classical $P$-points
need not exist \cite{W}.
Hence the existence of coherent $P$-ultrafilters is undecidable in ZFC.

\begin{question}
The consistency we have shown is what \cite{C} calls
``generic existence'' --- under our assumptions,
coherent $P$-ultrafilters not only exist, but
every small filter can be enlarged into one.
Questions arise:
\begin{itemize}
\item [(a)]
Is it consistent that $P$-points exist on $\omega$,
but there are no coherent $P$-ultrafilters
on complete atomless ccc algebras?
\item [(b)]
Is it consistent that a coherent $P$-ultrafilter exists
on a complete atomless ccc algebra $\B$, but does not exist on another?
\item [(c)]
Is there a single ``testing'' algebra $\B$ with the property that
if there is a coherent $P$-ultrafilter on $\B$, then necessarily $\c=\d$,
and hence $P$-ultrafilters exist generically?
\end{itemize}
\end{question}

\subsection{Coherent selective ultrafilters}

Similarly to coherent $P$-ultrafilters,
we start with the following characterization
of coherent selective ultrafilters via partitions.

\begin{observation}\label{coherentS}
Let $\B$ be a complete ccc algebra. An ultrafilter $\U$ on $\B$ is
a coherent selective ultrafilter iff for every pair of partitions $P$ and $Q$
of $\B$ such that $P \preceq Q$, either $\U \cap Q \neq \emptyset$,
or there is a set $X \sub P$ such that $\Join{X} \in \U$ and for every
$q\in Q$, the set $\setof{p\in X}{p \meet q \neq 0}$ is at most a singleton.
\end{observation}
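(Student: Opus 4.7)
My plan is to mimic the proof of Observation \ref{coherentP}, replacing ``finite'' by ``at most singleton'' throughout. The essential bookkeeping translates partitions of $\omega$ into partitions of $\B$ coarser than a fixed $P$, via the isomorphism $\B_P \iso P(\omega)$: enumerating $P = \setof{p_n}{n\in\omega}$, any partition $Q$ with $P\preceq Q$ determines for each $q\in Q$ its fiber $B_q = \setof{n\in\omega}{p_n\leq q}$; the family $\setof{B_q}{q\in Q}$ partitions $\omega$, satisfies $q = \Join\setof{p_n}{n\in B_q}$, and therefore $q\in \U$ iff $B_q$ lies in the induced ultrafilter $\U_P$ on $\omega$.

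For the forward direction, I assume $\U$ is a coherent selective ultrafilter and $P\preceq Q$ are partitions of $\B$ with $\U\cap Q = \emptyset$. Then $\U_P$ is by hypothesis a selective ultrafilter on $\omega$, and no fiber $B_q$ belongs to $\U_P$; selectivity yields $A\in\U_P$ meeting every $B_q$ in at most one point. Setting $X = \setof{p_n}{n\in A}$ gives $\Join X\in \U$, and $\setof{p\in X}{p\meet q\neq 0} = \setof{p_n}{n\in A\cap B_q}$ has at most one element for each $q\in Q$.

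For the converse, I assume the partition condition, fix a partition $P = \setof{p_n}{n\in\omega}$ of $\B$, and show that $\U_P$ is selective on $\omega$. Given a partition $\setof{B_m}{m\in\omega}$ of $\omega$ with no block in $\U_P$, form $q_m = \Join\setof{p_n}{n\in B_m}$ in $\B$; then $Q = \setof{q_m}{m\in\omega}$ is a partition of $\B$ refined by $P$, with $\U\cap Q = \emptyset$. The hypothesis produces $X\sub P$ with $\Join X\in\U$ meeting each $q_m$ in at most one element, and then $A = \setof{n\in\omega}{p_n\in X}$ lies in $\U_P$ and satisfies $|A\cap B_m|\leq 1$ for every $m$.

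One small detail to watch is that $Part(\B)$ consists of infinite partitions, whereas selectivity on $\omega$ is formulated for arbitrary partitions; but any finite partition of $\omega$ has some block in any ultrafilter, so those cases are vacuous. Beyond this bookkeeping I anticipate no real obstacle: the argument is a direct translation, modulo the identification of coarsenings of $P$ with partitions of $\omega$.
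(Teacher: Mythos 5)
Your proof is correct, and it is exactly the routine translation the paper has in mind: the paper states this observation without proof (as it does for Observation \ref{coherentP}), relying on the identification of coarsenings of $P$ with partitions of $\omega$ via $\B_P \iso P(\omega)$, which is precisely your argument. The fiber bookkeeping and the remark dismissing finite partitions of $\omega$ cover the only points that needed care.
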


The following proposition generalizes the arguments from [Ke] and [Ca]
on existence of selective  ultrafilters on $\omega$ to coherent selective
ultrafilters on complete ccc algebras.

\begin{proposition}
Let $\B$ be a complete ccc Boolean algebra of size at most $\c$.
Then every filter $\F$ on $\B$ with a base smaller than $\c$
can be extended to a coherent selective ultrafilter on $\B$
if and only if $\c = \cov(\cal M)$.
\end{proposition}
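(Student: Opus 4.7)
The plan is to mirror the iteration from the previous proposition on coherent $P$-ultrafilters, but replace the Ketonen/dominating-family step with a Canjar-style category argument that is driven by $\cov(\M) = \c$ instead of $\d = \c$.

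Assuming $\c = \cov(\M)$, enumerate all partition pairs $P_\alpha \preceq Q_\alpha$ (of which there are at most $\c$, since $|\B|\leq\c$) and construct an increasing chain $\seqof{\F_\alpha}{\alpha<\c}$ of filters, each with base smaller than $\c$, starting from $\F_0=\F$, taking generated unions at limits, and at successor $\alpha=\beta+1$ discharging the pair $(P_\beta,Q_\beta)$ in the sense of \ref{coherentS}. If some $q\in Q_\beta$ is compatible with $\F_\beta$, I simply enlarge $\F_\beta$ by $q$. Otherwise, enumerate $Q_\beta=\setof{q_n}{n\in\omega}$ and the refining pieces as $\setof{p_n^m}{m\in\omega}$ below each $q_n$ (reducing as in the previous proof to the case that each $q_n$ is split into infinitely many pieces), and fix a base $\setof{a_\xi}{\xi<\kappa}$ of $\F_\beta$ with $\kappa<\c$.

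The Canjar step itself is carried out in the Polish space $\omega^\omega$: for each $\xi<\kappa$ put
$$M_\xi = \setof{g\in\omega^\omega}{(\forall n\in\omega)\ a_\xi\meet p_n^{g(n)} = 0_\B}.$$
Each $M_\xi$ is closed. A key preliminary claim, which I expect to be the main technical point, is that in the non-trivial case every $a_\xi$ meets infinitely many $q_n$: indeed, if $a_\xi \leq \Join_{n\in F}q_n$ for some finite $F$, then incompatibility of each $q_n$ with $\F_\beta$ gives $-q_n\in\F_\beta$ for $n\in F$, hence $-\Join_{n\in F}q_n\in\F_\beta$, contradicting $a_\xi\in\F_\beta$. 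Granted this, density of the complement of $M_\xi$ is routine: given any finite partial condition $s$, pick $n\notin\dom(s)$ with $a_\xi\meet q_n\ne 0$ and then $m$ with $a_\xi\meet p_n^m\ne 0$, and extend $s$ by $s(n)=m$. So every $M_\xi$ is nowhere dense, and $\kappa<\cov(\M)$ produces a $g\in\omega^\omega\setminus\Union_\xi M_\xi$.

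Setting $a=\Join\setof{p_n^{g(n)}}{n\in\omega}$, the element $a$ meets every $a_\xi$ (via the witness $n$) and the set $X=\setof{p_n^{g(n)}}{n\in\omega}$ contains exactly one member below each $q_n$; letting $\F_{\beta+1}$ be generated by $\F_\beta\cup\{a\}$ preserves the base-size bound and secures the condition of \ref{coherentS} for the pair $(P_\beta,Q_\beta)$. Any ultrafilter extending $\Union_\alpha\F_\alpha$ is then the desired coherent selective ultrafilter on $\B$.

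For the converse, I would argue as in the previous proposition: fix any infinite countable partition $P$ of $\B$, so that $\B_P\iso P(\omega)$ by observation earlier. Any filter on $\omega$ (viewed inside $\B_P\sub\B$) generically extends, by hypothesis, to a coherent selective ultrafilter $\U$ on $\B$, and $\U_P=\U\cap\B_P$ is then a selective ultrafilter on $P(\omega)$ extending the given filter. Hence generic existence of selective ultrafilters on $\omega$ holds, which by Canjar \cite{C} forces $\c=\cov(\M)$.
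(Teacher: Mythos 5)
Your argument is correct and essentially identical to the paper's: the same transfinite iteration over partition pairs, with the successor step settled by the same Baire-category argument --- your closed sets $M_\xi$ are exactly the paper's nowhere dense sets $T_\xi$ of selectors whose join is orthogonal to $a_\xi$, and $\kappa<\cov(\M)$ supplies the selector whose join is adjoined to the filter. Your converse (which the paper leaves to Canjar) is also in order; just note that to guarantee that $\U_P$ is a free, hence genuinely selective, ultrafilter, the filter you transfer into $\B_P$ should contain the complements of finite joins of members of $P$ (the analogue of the Fr\'echet filter), so that $\U\cap P=\emptyset$.
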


\begin{proof}
Assume $\c = \cov(\cal M)$ and let $\F$ be a filter with a base
smaller than $\c$. We will construct an increasing chain of filters
extending $\F$.
Put $\F_0 = \F$ and enumerate all partition pairs $P\preceq Q$
as $\setof{(P_\alpha, Q_\alpha)}{\alpha < \cov(\cal M)\hbox{ isolated }}$.

If an increasing chain $\seqof{\F_\beta}{\beta < \alpha}$
of filters has been found such that every $\F_\beta$ has a base smaller
than $\c$ and has the selective property with respect to all
$\setof{(P_\gamma, Q_\gamma)}{\gamma < \beta}$, proceed as follows.

If $\alpha$ is a limit, take for $\F_\alpha$ the filter generated by
$\Union\setof{\F_\beta}{\beta<\alpha}$; then $\F_\alpha$ still has
a base smaller than $\c$.

If $\alpha=\beta+1$ is a successor, consider $(P,Q)=(P_\beta, Q_\beta)$.
Without loss of generality, both partitions are infinite,
and every $q_n\in Q$ is infinitely partitioned into $p_n^m\in P$.

If there is some $q\in Q$ compatible with $\F_\beta$,
let $\F_\alpha$ be the filter generated by $\F_\beta \cup \{q\}$.
If there is no such $q\in Q$, consider some base
$\setof{a_\xi}{\xi < \kappa}$ of $\F_\beta$, where $\kappa < \c$.
Every $a_\xi$ intersects infinitely many $q\in Q$:
if $a_\xi$ only meets $q_1, \dots, q_n \in Q$,
choose $a_\xi^i$ disjoint with $q_i$, respectively;
then $a_\xi \leq \Join{q_i}$ is disjoint with $\Meet{a_\xi^i}$
--- a contradiction.

Consider the set $T = \Pi_{n\in\omega}\setof{p_n^m}{m\in\omega}$;
the functions $\phi\in T$ are the selectors for $Q$.
View $T$ as a copy of the Baire space $\omega^\omega$.
If no selector for $Q$ 
is compatible with $\F_\beta$, put $T_\xi=\setof{\phi\in T}{
\Join{rng(\phi)}\perp a_\xi}$; then we have $T=\Union_{\xi<\kappa}{T_\xi}$.
But the sets $T_\xi$ cannot cover $T$, as $\kappa < \cov(\cal M)$
and every $T_\xi$ is a nowhere dense subset of $T$, which is seen as follows.

For a basic clopen subset $[s]$ of $T$,
there is some $n>|s|$ such that $a_\xi$ meets $q_n\in Q$,
because $a_\xi$ meets infinitely many $q_n$.
Hence some $p_n^m$ meets $a_\xi$.
Extend $s$ into $t$ so that $t(n) = m$.
Then $[t]\sub[s]$ is disjoint with $T_\xi$.

Thus there must be a selector $\phi\in T$ with $ b = \Join{rng(\phi)}$
compatible with every $a_\xi$. Let $\F_{\beta+1}$ be the filter generated
by $\F_\beta \cup \{b\}$. Iterating this process, we obtain an increasing
sequence of filters $\seqof{\F_\alpha}{\alpha\in\c}$ extending $\F=\F_0$.
Now every ultrafilter extending $\Union\F_\alpha$
is a coherent selective ultrafilter on $\B$
by \ref{coherentS}.
\end{proof}

\section{Nonhomogeneity}\label{homogeneity}

\begin{definition}
A topological space $X$ is {\em homogeneous\/}
if for every pair of points $x,y \in X$
there is an autohomeomorphism $h$ of $X$ such that $h(x) = y$.
\end{definition}

Extremally disconnected compact Hausdorff spaces,
which are precisely the Stone spaces of complete Boolean algebras,
are long known {\em not\/} to be homogeneous.
However, the original elegant proof due to Frol{\'\i}k \cite{F}
suggests no simple topological property of points
to be a reason for this.

If a space $X$ is not homogeneous, then points $x,y \in X$
failing the automorphism property are often called
{\em witnesses of nonhomogeneity\/}.
In large subclasses of the extremally disconnected compacts,
such witnesses have been found by isolating a simple topological
property that is shared by some, but not all, points in the space.

\begin{definition}
A point $x$ of a topological space $X$ is
and {\em untouchable point\/} if $x\notin \overline D$
for every countable nowhere dense subset $D \sub X$
not containing $x$.
\end{definition}

The subclass of extremally disconnected compact spaces
where a witness of nonhomogeneity hasn't been explicitly described yet
is currently reduced to the class of ccc spaces of weight at most continuum.
In other extremally disconnected compacts,
points with even stronger properties have been found.
See \cite{S} and \cite{BS} for history and pointers
to the development of these questions.

\subsection{An application to nonhomogeneity}

Via Stone duality, the topic has a Boolean translation:
we are looking for discretely untouchable ultrafilters
on complete ccc Boolean algebras of size
(or, equivalently, algebraic density) at most continuum.
It is in this form that we actually deal with the question.


\begin{proposition}
Let $\B$ be a complete ccc algebra.
Let $\U$ be a coherent $P$-ultrafilter on $\B$.
Then $\U$ is an untouchable point in 
the Stone space of $\B$.
\end{proposition}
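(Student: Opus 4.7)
The plan is, given a countable nowhere dense $D = \{\V_n : n \in \omega\} \sub \St(\B)$ with $\U \notin D$, to produce one element $b \in \U$ satisfying $b \notin \V_n$ for every $n$; then the clopen $[b]$ separates $\U$ from $D$, witnessing $\U \notin \overline D$. The strategy is to build a single partition $P$ of $\B$ such that (i) no cell of $P$ lies in any $\V_n$, and (ii) $\B_P$ contains an element distinguishing $\U$ from each $\V_n$. Inside $\B_P \iso P(\omega)$ the coherent $P$-point property then reduces the problem to the classical fact that a $P$-point in $\omega^*$ lies outside the closure of any countable set of distinct free ultrafilters.

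For each $n$, choose $c_n \in \U \setminus \V_n$ and let $b_n = c_0 \meet \cdots \meet c_n$, so $(b_n)$ is decreasing in $\U$ and $b_n \notin \V_m$ whenever $m \leq n$. If $\Meet_n b_n \in \U$, set $b = \Meet_n b_n$ and observe $b \leq b_m \notin \V_m$ for every $m$. Otherwise define
$$P^* = \{-b_0\} \cup \{b_N - b_{N+1} : N \in \omega\} \cup \{\Meet\nolimits_n b_n\}$$
(dropping zero members). A short calculation in the complete algebra $\B$ shows $\Join_N(b_N - b_{N+1}) = b_0 - \Meet_n b_n$, so $P^*$ is a partition of $\B$, and every $b_n$ already lies in $\B_{P^*}$ since $b_n = \Join_{N \geq n}(b_N - b_{N+1}) \join \Meet_k b_k$.

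By nowhere density of $D$, the set $E = \{x > 0 : x \notin \V_m \text{ for all } m\}$ is dense in $\B$. Below each $p \in P^*$ pick a maximal antichain $R_p \sub E$; by ccc it is countable and, by density, partitions $p$. Let $P = \bigcup_{p \in P^*} R_p$: a partition of $\B$ whose every cell avoids every $\V_m$, with $\B_P \supseteq \B_{P^*}$ still containing each $b_n$. If $\U \cap P \neq \emptyset$, any $r$ in the intersection serves as $b$. Otherwise $\U_P$ is a free ultrafilter on $P \iso \omega$ and a $P$-point by coherence of $\U$; each $\bar\V_m := \V_m \cap \B_P$ is a free ultrafilter on $P$ (no cell is in $\V_m$), and $b_m \in \B_P \cap (\U \setminus \V_m)$ witnesses $\bar\V_m \neq \U_P$. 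A pseudo-intersection $b \in \U_P$ of $\{b_m\}_m$ then has $b - b_m$ a finite join of cells of $P$ for each $m$; since each cell is outside $\V_m$, so is this finite join, and combined with $b_m \notin \V_m$ we conclude $b \leq b_m \join (b - b_m) \notin \V_m$, hence $b \notin \V_m$.

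The main obstacle is to reconcile two opposing demands inside one partition $P$: every cell of $P$ must uniformly avoid all $\V_n$, so that the $\bar\V_n$ become free and the classical $P$-point-avoids-closure argument is available; yet $\B_P$ must separately encode a witness distinguishing $\U$ from each $\V_n$, so that $\bar\V_n \neq \U_P$. The partition $P^*$ attached to a decreasing sequence $(b_n) \sub \U$ automatically places every $b_n$ in $\B_{P^*}$, and refining $P^*$ by nowhere density adds the avoidance property without losing any $b_n$.
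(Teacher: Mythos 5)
Your proof is correct. The engine is the same as in the paper --- manufacture a partition pair $P \preceq Q$ with $\U \cap Q = \emptyset$ and apply the coherence of $\U$ to it --- but your construction of the pair is essentially dual to the paper's, and nowhere density enters differently. The paper takes its coarse partition $Q = \setof{a_i}{i\in\omega}$ from the $D$-side (disjointified witnesses $a_i \in \F_{n_i}\setminus\U$) and uses nowhere density only locally, refining each $a_i$ by a partition avoiding just those members of $D$ that contain $a_i$; the selected $u=\Join X$ then meets each $a_i$ in only finitely many cells, which is what excludes $u$ from every $\F_n$. You take your coarse partition $P^*$ from the $\U$-side (the decreasing sequence $b_n\in\U\setminus\V_n$) and use nowhere density globally, refining $P^*$ to a partition $P$ every cell of which avoids every $\V_m$; this makes each $\V_m\cap\B_P$ a free ultrafilter on $\B_P\iso P(\omega)$ and reduces the proposition to the classical argument that a $P$-point of $\omega^*$ is a weak $P$-point, run verbatim inside the single subalgebra $\B_P$. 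Your route is the more modular one, isolating the classical fact cleanly; the paper's buys the strengthening recorded after its proof (the argument applies to any nowhere dense set covered by countably many disjoint open sets, since it only ever enumerates a disjoint open cover of $D$, not its points), which your construction cannot give directly because the $c_n$ require an enumeration of $D$ itself. All your individual steps check: $E$ is dense precisely because $D$ is nowhere dense; each $R_p$ is countable by ccc and joins up to $p$ by maximality; $b_n\in\B_{P^*}\sub\B_P$ as computed; the degenerate cases ($\Meet_n b_n\in\U$, which also absorbs the case of a principal $\U$, and $\U\cap P\neq\emptyset$) are covered; and the final estimate $b\leq b_m\join(b-b_m)$ with neither joinand in $\V_m$ correctly yields $b\notin\V_m$.
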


\begin{proof}
We assume that $\U$ is not an atom, otherwise there is nothing to prove.
Let $R = \setof{\F_n}{n\in\omega}$ be a~countable nowhere dense set in
$\St(\B)$ such that $\F_n \not= \U$ for all $n$. Choose some $a_0 \in \F_0$
with $-a_0 \in \U$ and put $R_0 = \setof{\F\in R}{a_0 \in \F} \sub R$.
Generally, if $a_i\in \B^+$ for $i < k$ are disjoint elements such that
$\Join_{i<k}{a_i} \notin \U$ and $R_i = \setof{\F\in R}{a_i\in\F}$,
consider $\Union_{i<k}{R_i}\sub R$. If $\Union_{i<k}{R_i} = R$, we are done,
as $-\Join_{i<k}{a_i}\in \U$ guarantees $\U \not \in cl(R)$. Otherwise, let
$n_k$ be the first index such that $\F_{n_k} \not \in \Union_{i<k}{R_i}$ and
choose some $a_k$ disjoint with $\Join_{i<k}{a_i}$ such that $a_k\in\F_{n_k}$
and $a_k \notin \U$.

This construction either stops at some $k$ and we are
done, or we arrive at an infinite disjoint system
$Q=\setof{a_i}{i\in\omega}\sub\B^+$.
Again, if $\Join{Q}\not \in \U$, we have $\U \not \in cl(R)$.
Otherwise, we can assume that $\Join{Q} = 1$, so $Q$ is a partition of $\B$.
For each $a_i \in Q$, choose an infinite partition $P_i$ of $a_i$ such that
$P_i \cap \Union{R_i} = \emptyset$ -- this is possible, because $R_i \sub R$
is nowhere dense. Now $P=\Union{P_i} \preceq Q$ is a partition pair in $\B$.

As $\U$ is a coherent $P$-ultrafilter and misses $Q$,
there is some $X\sub P$ with $u = \Join{X}\in \U$
such that for every $i$, the set $\setof{p\in X}{p\leq a_i}$ is finite.
This means that $u\notin\F_n$ for all $n$: every $\F_n$ is in one
particular $a_i$, so $u \in \F_n$ would mean that $\F_n$ contains
one of the finitely many $\setof{p \leq u}{p\leq a_i}$. But this is
in contradiction with $P_i \cap \Union{R_i} = \emptyset$.
So $u\in\U$ isolates $\U$ from $cl(R)$.
\end{proof}

In fact, we have proven a slightly stronger statement:
$\U$ escapes the closure of any nowhere dense set
that can be covered by countably many disjoint open sets.


\bibliographystyle{plain}

\end{document}